\newtheorem{theorem}{Theorem}[section]
\newtheorem{corollary}[theorem]{Corollary}
\theoremstyle{definition}
\newtheorem{remark}[theorem]{Remark}
\numberwithin{equation}{section}
\begin{document}
\title{A new proof for the exact values of $\zeta(2k)$ for $k \in \mathbb{N}$ } 
\author{Costandin Marius}
\address{Technical University of Cluj-Napoca, \\
          	Str. Memorandumului nr. 28, 400114, \\ Cluj-Napoca, Romania}
\email{constandin@mail.utcluj.ro}

\subjclass{30B40; 30D99}
\keywords{Hadamard factorization, complex series, zeta function}
\begin{abstract}
We establish a connection between a function and a series representation  using a similar technique with that that Euler used to solve the Basel problem.  Our result concerns a more general series from which one can obtain $\zeta(2k)$ as a limit case. We also are able to prove the well known result expressing $\zeta(2k) $ with Bernoulli numbers as an application. 
\end{abstract}
\maketitle

\section{Introduction}
Euler solved the Basel problem, (the exact value of $\zeta(2)$) by expressing $\frac{\sin(x)}{x}$ as a product of its zeros. The solution was not rigorous at the time, and later (several years) he solved the more general problem (the exact values for $\zeta(2k), k \in \mathbb{N}$) more rigorously but using another approach. However, the idea of factorization was later developed by K. Weierstrass and refined by Hadamard. In the following we give a well known theorem due to Hadamard:

\begin{theorem}[Hadamard factorization theorem]
Let 
\begin{align}
E_n(z) = \begin{cases} 1-z \hspace{2cm} &n = 0\\ (1 - z)\cdot e^{\sum_{i=1}^n \frac{z^i}{i}}  &n > 0\end{cases}
\end{align} and let $f$ be an entire function of order $\rho$, $A = \{a_n \in \mathbb{C}| f(a_n) = 0, a_n \neq 0\}$ and suppose that $f$ has a zero in $z = 0$ of order $m \geq 0$, then:
\begin{align}
f(z) = z^m e^{P(z)} \prod_{i=1}^n E_{[\rho]}\left( \frac{z}{a_n}\right)
\end{align} where $P(z)$ is a polynomial of degree $N \leq \rho$
\end{theorem}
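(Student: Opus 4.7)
My plan is to follow the classical three-stage strategy for Hadamard's theorem. The first stage is to build the canonical product over the nonzero zeros of $f$. Taking $p=[\rho]$, set
\begin{align}
\Pi(z) = \prod_{n} E_{p}\!\left(\frac{z}{a_n}\right).
\end{align}
Convergence follows from Jensen's formula applied to $f$: the order hypothesis gives $|f(z)| \le \exp(|z|^{\rho+\varepsilon})$ for large $|z|$ and every $\varepsilon > 0$, so Jensen's formula bounds the zero-counting function by $n(r) \le C r^{\rho+\varepsilon}$; Abel summation then yields $\sum_n |a_n|^{-s} < \infty$ for every $s > \rho$, and in particular for $s = p+1$, which is exactly the Weierstrass condition for locally uniform convergence of a product with fixed exponent $p$. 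Thus $\Pi$ is entire with the same zero set, counted with multiplicity, as $f/z^m$.

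The second stage is immediate: the quotient $h(z) = f(z)/(z^m \Pi(z))$ extends across removable singularities to an entire nowhere-vanishing function, and on the simply connected domain $\mathbb{C}$ it can therefore be written $h(z) = e^{P(z)}$ for a unique entire $P$.

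The third and decisive stage is to show $\deg P \le [\rho]$. I would establish a lower bound
\begin{align}
|\Pi(z)| \ge \exp\bigl(-|z|^{\rho+\varepsilon}\bigr)
\end{align}
on a sequence of circles $|z|=r_k \to \infty$ chosen to avoid small \emph{excluded discs} around each zero, built from the two-regime estimate on $\log|E_p(w)|$ obtained by splitting $|w|\le 1/2$ from $|w|>1/2$ and using $\log|E_p(w)| = \log|1-w| + \Re\sum_{i=1}^p w^i/i$. Combining this with the order bound on $|f|$ yields $\Re P(z) \le |z|^{\rho+\varepsilon}$ on those circles, and the Borel--Carath\'eodory inequality promotes this one-sided bound to $|P(z)| \le C|z|^{\rho+\varepsilon}$ on slightly smaller circles. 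Cauchy's estimates on the Taylor coefficients of $P$ then kill every coefficient of index exceeding $\rho$, giving $\deg P \le [\rho]$.

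The hardest piece is the lower bound on $\Pi$: the pointwise estimate on each individual $E_p$ factor is routine, but assembling the factors into a global bound requires the excluded-disc construction and a careful choice of radii $r_k$ steering clear of the zeros, using the convergence of $\sum |a_n|^{-(p+1)}$ to control the total measure of excluded radii. Once this technical estimate and its Borel--Carath\'eodory companion are in place, the degree bound on $P$ is a short Cauchy-coefficient computation.
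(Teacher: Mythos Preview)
Your outline is the standard, correct route to Hadamard's theorem: build the canonical product $\Pi$ of genus $p=[\rho]$ (convergence via Jensen's formula and the zero-counting bound), write $f(z)=z^m e^{P(z)}\Pi(z)$ with $P$ entire, then bound $\deg P$ by combining a minimum-modulus estimate for $\Pi$ on well-chosen circles with Borel--Carath\'eodory and Cauchy estimates. The technical heart, as you identify, is the lower bound on $|\Pi|$ away from excluded discs about the zeros.

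However, there is nothing to compare against: the paper does \emph{not} prove this theorem. It is quoted in the introduction as a known result attributed to Hadamard and is used as a black box in the proof of the paper's main theorem (Theorem~2.1). The bibliography points to standard references (Boas, Conway, Knopp) where the proof you sketch can be found in full. So your proposal is not an alternative to the paper's argument; it simply supplies what the paper deliberately omits.
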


\section{Main results}

Our main theorem is the following:  

\begin{theorem} For all $x \in \mathbb{R}_+ \setminus \{1\}$ one has 
\begin{align}
\frac{w}{(1 - w)^2} - \frac{1}{\log^2(w)} = 2 \cdot \sum_{n=1}^{\infty} \frac{\log(w)^2 - 4 \pi^2 n^2}{(\log(w)^2 + 4\pi^2 n^2)^2}
\end{align}
\end{theorem}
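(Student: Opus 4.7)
My plan is to reduce the identity to a statement about a real variable $s=\log w$ and then recognize it as the second derivative of $\log\bigl(2\sinh(z/2)/z\bigr)$, whose Hadamard product is accessible from the theorem quoted in the introduction. The key algebraic observation is that
\[
(1-w)^2=e^s\bigl(e^{s/2}-e^{-s/2}\bigr)^2=4w\sinh^2(s/2),
\]
so the identity to be proved is equivalent, after substituting $s=\log w\in\mathbb R\setminus\{0\}$, to
\[
\frac{1}{4\sinh^2(s/2)}-\frac{1}{s^2}=2\sum_{n=1}^\infty\frac{s^2-4\pi^2 n^2}{(s^2+4\pi^2 n^2)^2}.
\]

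I would then apply the Hadamard factorization theorem stated in the introduction to the entire function $g(z)=2\sinh(z/2)/z$. It has order $1$, is even, satisfies $g(0)=1$, and has simple zeros precisely at $z=2\pi i n$ for $n\in\mathbb Z\setminus\{0\}$. Hadamard gives $g(z)=e^{P(z)}\prod_{n\neq 0}E_1\bigl(z/(2\pi i n)\bigr)$ with $\deg P\le 1$; pairing $n$ with $-n$ cancels the linear exponential factors inside the $E_1$'s, and the evenness of $g$ combined with $g(0)=1$ forces $P\equiv 0$. This recovers the classical product
\[
g(z)=\prod_{n=1}^{\infty}\left(1+\frac{z^2}{4\pi^2 n^2}\right).
\]

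Taking the logarithmic derivative termwise, which is justified by locally uniform convergence off the zero set, produces the standard expansion $\tfrac12\coth(z/2)-\tfrac1z=\sum_{n\ge 1}\frac{2z}{4\pi^2 n^2+z^2}$. Differentiating once more — again termwise, since the derived series has $O(1/n^2)$ terms and thus converges uniformly on compacta away from the poles — yields
\[
\frac{1}{z^2}-\frac{1}{4\sinh^2(z/2)}=2\sum_{n=1}^{\infty}\frac{4\pi^2 n^2-z^2}{(4\pi^2 n^2+z^2)^2}.
\]
Multiplying by $-1$ and substituting $z=s=\log w$ delivers exactly the theorem's identity.

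The main obstacle is the clean identification of the Hadamard prefactor $P(z)$: one must exploit both the evenness of $g$ and the normalization $g(0)=1$ to conclude $P\equiv 0$. Once that is settled, the two termwise differentiations and the algebraic substitution $z=\log w$ are routine, and real positivity of $w$ ensures everything stays inside the domain of definition of $\log$.
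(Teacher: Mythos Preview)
Your argument is correct, but it follows a genuinely different route from the paper. The paper applies Hadamard factorization directly to $f(z)=e^{z}-w$, keeping $w$ as a parameter: it locates the zeros $z_n=\log w+2\pi i n$, writes the canonical product, and determines the unknown prefactor $e^{a_0+a_1 z}$ by matching the Taylor coefficients of $e^{z}-w$ at $z=0$ (yielding $e^{a_0}=1-w$ and $a_1=1/(1-w)$). The identity then drops out of the Newton relation $S_1^{2}-2S_2=\sum_n 1/z_n^{2}$. Your approach instead first eliminates the parameter via the substitution $s=\log w$ and the algebraic identity $(1-w)^2=4w\sinh^{2}(s/2)$, then applies Hadamard to the fixed even function $g(z)=2\sinh(z/2)/z$, and finally differentiates the logarithm twice. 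What you gain is a cleaner identification of the Hadamard prefactor (evenness plus $g(0)=1$ force $P\equiv 0$ immediately) and access to the well-known $\sinh$ product; the price is the initial algebraic trick and two termwise differentiations. The paper's route, by contrast, needs no preliminary change of variable and extracts the result from a single coefficient comparison, at the cost of a more delicate determination of $a_0,a_1$ and the sums $S_1,S_2$. The two are related under the hood, since $e^{z}-w=2e^{(z+s)/2}\sinh\bigl((z-s)/2\bigr)$, but the organizing principles---coefficient matching versus logarithmic differentiation of a normalized even function---are distinct.
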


\begin{proof}
Let $w \in \mathbb{R}_+ \setminus \{1\}$ and let $f(z) = e^{z} - w$. It is known that $f$ is entire of order $\rho = 1$. Let 
\begin{align}
A = \{z_n, n\in \mathbb{Z}| f(z_n) = 0\}
\end{align} Then one can assert, using Hadamard  factorization theorem, that:
\begin{align} 
	e^{z} - w = e^{a_0 + z \cdot a_1} \prod_{z_n\in A} \left(1 - \frac{z}{z_n} \right) e^{\frac{z}{z_n}}
\end{align} The zeros of $f$ can be easily found by letting $z = x + i\cdot y$

 \begin{align}
 e^{x+i\cdot y} = w \hspace{0.3cm} \Rightarrow x = \log(w) \hspace{0.3cm} y = 2 \pi n 
 \end{align} $\forall n \in \mathbb{Z}$. Therefore 
 
 \begin{align}
 e^z - w &= e^{a_0 + z\cdot a_1} \prod_{n\in \mathbb{Z}} \left(1 - \frac{z}{\log(w) + i\cdot 2\pi n} \right) e^{\frac{z}{\log(w) + i\cdot 2\pi n}} \nonumber \\ 
 & = e^{a_0 + z\cdot a_1} \prod_{n\in \mathbb{Z}} \left(1 - z\frac{\log(w) - i\cdot 2\pi n}{\log(w)^2 +  4\pi^2 n^2} \right) \cdot e^{z \sum_{n \in \mathbb{Z}} \frac{\log(w) - i\cdot 2\pi n}{\log(w)^2 +  4\pi^2 n^2}} \nonumber \\ 
 &= e^{a_0 + z\cdot a_1} \prod_{n\in \mathbb{Z}} \left(1 - z\frac{\log(w) - i\cdot 2\pi n}{\log(w)^2 +  4\pi^2 n^2} \right) \cdot e^{z \sum_{n =1}^{\infty} \frac{2 \log(w)}{\log(w)^2 +  4\pi^2 n^2} + \frac{z}{\log^2(w)}}
 \end{align} Let 
 \begin{align}
 g(z) &= \prod_{n\in \mathbb{Z}} \left(1 - z\frac{\log(w) - i\cdot 2\pi n}{\log(w)^2 +  4\pi^2 n^2} \right) 
 \end{align} and $\frac{1}{z_n} = \frac{\log(w) - i\cdot 2\pi n}{\log(w)^2 + 4 \pi^2 n^2}$ then 
 \begin{align}
 S_1 &= \sum_{n\in \mathbb{Z}} \frac{1}{z_n} = \frac{1}{\log(w)} + \sum_{n =1}^{\infty} \frac{2 \log(w)}{\log(w)^2 +  4\pi^2 n^2} \nonumber \\
 S_2 &= \sum_{-\infty<n<m<\infty} \frac{1}{z_n\cdot z_m} \hspace{0.3cm} S_3 = \sum_{-\infty<n<m<k<\infty} \frac{1}{z_n\cdot z_m\cdot z_k} \hspace{0.3cm} etc
 \end{align} Because $g$ is holomorphic one has
 
 \begin{align}
 g(z) = 1 - S_1 z + S_2 z^2 - S_3 z^3 + \hdots 
 \end{align} Letting $z = 0$ one obtains $$ 1 - w = e^{a_0} $$
 Therefore from the above identities one can obtain:
 \begin{align}
 e^z - w = (1 - w) g(z) \cdot e^{z \cdot (S_1 + a_1)}
 \end{align} Further more, because $S_1 < \infty$ 
 \begin{align}
 e^{z (S_1 + a_1)} = 1 + \frac{S_1+a_1}{1!} z + \frac{(S_1 + a_1)^2}{2!} z^2 + \hdots
 \end{align} hence 
 
 \begin{align}
 &e^z - w = \nonumber \\
 & = (1 - w) \cdot \nonumber \\
 & \cdot \left( 1 - S_1 z + S_2 z^2 - S_3 z^3 + \hdots  \right) \cdot \left( 1 + \frac{S_1+a_1}{1!} z + \frac{(S_1 + a_1)^2}{2!} z^2 + \hdots \right) \nonumber \\
 & = (1 - w) \cdot \nonumber\\ 
 & \cdot \left( 1 + \left(S_1 + a_1 - S_1 \right)\cdot z + \left( \frac{(S_1 + a_1)^2}{2} -S_1(S_1 + a_1) + S_2\right)z^2 + \hdots \right) \nonumber \\
 & = 1 - w + \frac{1}{1!}z + \frac{1}{2!} z^2 + \hdots 
 \end{align} therefore 
 
 \begin{align}
 (1 - w) a_1 = 1 \hspace{1cm}\Rightarrow \hspace{1cm} a_1 = \frac{1}{1 - w}
 \end{align} and 
 
 \begin{align}
 (1 - w) \left( -\frac{S_1^2}{2} + \frac{a_1^2}{2} + S_2\right) = \frac{1}{2} \hspace{0.5cm} \Rightarrow \hspace{0.5cm} S_1^2 - 2 S_2 = \frac{w}{(1 - w)^2}
 \end{align} but 
 
 \begin{align}
 S_1^2 - 2\cdot S_2 = \sum_{n \in \mathbb{Z}} \frac{1}{z_n^2} = \frac{1}{\log(w)^2} + 2\cdot \sum_{n = 1}^{\infty} \frac{\log(w)^2 - 4 \pi^2 n^2}{(\log(w)^2 + 4\pi^2 n^2)^2}
 \end{align} therefore
 
 \begin{align}
 \frac{w}{(1 - w)^2} - \frac{1}{\log^2(w)} = 2 \cdot \sum_{n=1}^{\infty} \frac{\log(w)^2 - 4 \pi^2 n^2}{(\log(w)^2 + 4\pi^2 n^2)^2}
 \end{align}
 
\end{proof}

%
%

\begin{remark}
Using the holomorphic functions identity one can extend the result to $w \in \mathbb{C}\setminus (-\infty,0]$
\end{remark} 

A simple corollary is the following:

 \begin{corollary}
 For all $x \in \mathbb{C} \setminus \{ i\cdot n | n \in \mathbb{Z}$\} one has:
 
 \begin{align}
 \frac{e^{2\pi x}}{\left(1 - e^{2\pi x}\right)^2} - \frac{1}{4 \pi^2 x^2} = \frac{1}{2\pi^2} \sum_{n=1}^{\infty} \frac{x^2 - n^2}{(x^2+n^2)^2}
 \end{align}
 \end{corollary}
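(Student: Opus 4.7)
The plan is to obtain the corollary as a direct substitution $w = e^{2\pi x}$ in the main theorem, followed by an analytic continuation argument to cover all $x \in \mathbb{C} \setminus \{in : n \in \mathbb{Z}\}$.

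First I would restrict to $x \in \mathbb{R}_+$, so that $w = e^{2\pi x} \in \mathbb{R}_+ \setminus \{1\}$ and the main theorem applies verbatim. Under this substitution, $\log(w) = 2\pi x$, hence $\log(w)^2 = 4\pi^2 x^2$ and $\log(w)^2 + 4\pi^2 n^2 = 4\pi^2(x^2 + n^2)$, while $\log(w)^2 - 4\pi^2 n^2 = 4\pi^2(x^2 - n^2)$. Plugging these into the right-hand side of the theorem yields
\begin{align*}
2\sum_{n=1}^{\infty} \frac{4\pi^2(x^2 - n^2)}{16\pi^4(x^2 + n^2)^2} = \frac{1}{2\pi^2}\sum_{n=1}^{\infty} \frac{x^2 - n^2}{(x^2 + n^2)^2},
\end{align*}
while the left-hand side becomes $\frac{e^{2\pi x}}{(1 - e^{2\pi x})^2} - \frac{1}{4\pi^2 x^2}$, exactly the identity we want, valid for every $x \in \mathbb{R}_+$.

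Next I would upgrade to all $x \in \mathbb{C} \setminus \{in : n \in \mathbb{Z}\}$. Both sides are meromorphic functions on $\mathbb{C}$: the left side is manifestly so (poles precisely where $e^{2\pi x} = 1$, i.e. $x = in$, with the $\tfrac{1}{4\pi^2 x^2}$ subtraction matching the pole at $x=0$), and the right side is a series of rational functions whose summands are dominated for $|n|$ large by $O(1/n^2)$ uniformly on compacta avoiding $\{in\}$, so it converges to a meromorphic function as well. Since the identity holds on $\mathbb{R}_+$, a set with accumulation points in the common domain of holomorphy, the identity theorem extends it to the full claimed domain.

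The only subtlety I anticipate is ensuring the series on the right-hand side legitimately defines a meromorphic function: one has to check normal convergence on compact subsets of $\mathbb{C} \setminus \{in\}$. This is routine from the bound $\left|\frac{x^2 - n^2}{(x^2 + n^2)^2}\right| = O(1/n^2)$ for fixed $x$ as $n \to \infty$, with the estimate uniform on compacta avoiding the pole set. Alternatively, one may invoke the remark to first extend the main theorem to a horizontal strip $|{\rm Im}(x)| < 1/2$ (on which $e^{2\pi x} \notin (-\infty,0]$), then extend by analytic continuation through the punctured plane.
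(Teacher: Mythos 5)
Your proposal is correct and follows the same route as the paper: substitute $w = e^{2\pi x}$ into the main theorem (the algebra checks out, giving the factor $\tfrac{1}{2\pi^2}$) and then extend from $\mathbb{R}_+$ to $\mathbb{C}\setminus\{in : n\in\mathbb{Z}\}$ by the identity theorem. The paper's own proof is a one-line version of exactly this; your added verification of normal convergence of the series on compacta avoiding the poles is a detail the paper omits but does not change the method.
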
 
 
 \begin{proof}
 In the above result let $w = e^{2\pi x}$ then use holomorphic function identity theorem.
 \end{proof}
 
 \begin{remark} One can use Riemann theorem on removable singularities to extent the function on $\mathbb{C}$
 \end{remark}
 \section{Application: Formula for $\zeta(2k)$ with $k \in \mathbb{N}$}
The following theorems allow the computation of $\zeta(2k)$ for $k 
\in \mathbb{N}$. The result is well known due to Euler, we give an alternative proof. 



\begin{theorem}
For $m \in \mathbb{N}$ one has
\begin{align}
(-1)^{m+1} \frac{B_{2m} \cdot (2\pi)^{2m}}{2 (2m)!} = \zeta(2m)
\end{align}

\end{theorem}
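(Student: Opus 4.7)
The plan is to compute the Taylor series at $x=0$ of both sides of the corollary identity
$$\frac{e^{2\pi x}}{(1-e^{2\pi x})^2} - \frac{1}{4\pi^2 x^2} = \frac{1}{2\pi^2}\sum_{n=1}^\infty \frac{x^2-n^2}{(x^2+n^2)^2}$$
(after first invoking the remark to extend the LHS analytically at $x=0$) and read the formula off by matching the coefficient of $x^{2m-2}$.

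For the left-hand side I would exploit the identity $\frac{e^t}{(e^t-1)^2}=-\frac{d}{dt}\frac{1}{e^t-1}$. Substituting the well-known expansion
$$\frac{1}{e^t-1}=\frac{1}{t}-\frac{1}{2}+\sum_{m=1}^{\infty}\frac{B_{2m}}{(2m)!}\,t^{2m-1}$$
(using $B_{2m+1}=0$ for $m\ge 1$), differentiating, and setting $t=2\pi x$, one finds
$$\frac{e^{2\pi x}}{(1-e^{2\pi x})^2}-\frac{1}{4\pi^{2}x^{2}}=-\sum_{m=1}^{\infty}\frac{(2m-1)B_{2m}}{(2m)!}(2\pi)^{2m-2}x^{2m-2}.$$

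For the right-hand side I would write $\frac{x^2-n^2}{(x^2+n^2)^2}=\frac{1}{x^2+n^2}-\frac{2n^2}{(x^2+n^2)^2}$ and expand each piece as a geometric / derived-geometric series in $x^2/n^2$, valid for $|x|<1$. The coefficient of $x^{2k}$ inside the $n$-th summand simplifies to $(-1)^{k+1}(2k+1)/n^{2k+2}$. Summing over $n\ge 1$ (the exchange is justified by the absolute convergence of $\sum 1/n^{2k+2}$ for $k\ge 0$) yields
$$\frac{1}{2\pi^{2}}\sum_{n=1}^{\infty}\frac{x^{2}-n^{2}}{(x^{2}+n^{2})^{2}} = -\frac{1}{2\pi^{2}}\sum_{k=0}^{\infty}(-1)^{k}(2k+1)\,\zeta(2k+2)\,x^{2k}.$$

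Equating coefficients of $x^{2k}$ and writing $m=k+1$ produces
$$\frac{(2m-1)B_{2m}}{(2m)!}(2\pi)^{2m-2}=\frac{(-1)^{m-1}(2m-1)}{2\pi^{2}}\,\zeta(2m),$$
which rearranges immediately to the claimed identity. The only genuinely delicate step is the term-by-term expansion of the RHS; I would secure it either by absolute convergence on a disk $|x|<1$ or, more robustly, by noting that both sides are holomorphic near $0$ so their Taylor coefficients must coincide. The Bernoulli bookkeeping (shift of index, the missing $B_1$ term, and the sign $(-1)^{m+1}$) is the only place a computational slip is likely, so I would double-check the $m=1$ case against $\zeta(2)=\pi^2/6$ as a sanity check before declaring the proof complete.
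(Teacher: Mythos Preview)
Your proposal is correct and follows essentially the same route as the paper: expand both sides of the corollary identity about $x=0$ and match coefficients, using the Bernoulli generating function on the left (via differentiating $1/(e^t-1)$) and a geometric-series expansion of $\frac{x^2-n^2}{(x^2+n^2)^2}$ on the right. The only cosmetic differences are that the paper phrases the matching as ``take the $2m$-th derivative and let $x\to 0$'' and expands $\frac{x-c}{(x+c)^2}$ directly rather than via your split $\frac{1}{x^2+n^2}-\frac{2n^2}{(x^2+n^2)^2}$; the resulting coefficient $(-1)^{k+1}(2k+1)/n^{2k+2}$ and the final identity are identical.
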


\begin{proof} 
Let $$f(x) =  \frac{e^{2\pi x}}{\left(1 - e^{2\pi x}\right)^2} - \frac{1}{4 \pi^2 x^2} $$ and let $h(x,c) = \frac{x - c}{(x + c)^2}$. 
From above one has:

\begin{align}
f(x) = \frac{1}{2\pi^2}\sum_{n=1}^{\infty} h(x^2,n^2)
\end{align} Differentiating $k$ times one obtains 

 \begin{align}\label{E3.4}
 f^{(k)}(x) = \frac{1}{2\pi^2} \sum_{n=1}^{\infty} \frac{\partial^k h(x^2,n^2)}{\partial x^k}
 \end{align}
 
%
The $k$'th derivative of $h(x,c)$ with respect to $x$, can be found as follows

\begin{align}
\frac{1}{x + c} = \sum_{k=0}^{\infty} \frac{(-1)^k}{c^{k+1}}x^k \hspace{0.5cm} \Rightarrow \hspace{0.5cm} \frac{1}{(x + c)^2} = \sum_{k=1}^{\infty} (-1)^{k+1} \frac{k}{c^{k+1}}x^{k-1}
\end{align} therefore 

\begin{align}
\frac{x - c}{(x+c)^2} = \sum_{k=1}^{\infty} (-1)^{k+1} \frac{2k+1}{c^{k+1}} x^k - \frac{1}{c}
\end{align} hence 

\begin{align}
\frac{x^2 - c}{(x^2+c)^2} = \sum_{k=1}^{\infty} (-1)^{k+1} \frac{2k+1}{c^{k+1}} x^{2k} - \frac{1}{c}
\end{align} Differentiating $2m$ times and letting $x \to 0$, one obtains

\begin{align}\label{E3.9}
\lim_{x \to 0} \frac{d^{2m} h(x^2,c)}{dx^{2m}} = (-1)^{m+1} \frac{(2m+1)!}{c^{m+1}}
\end{align}

From Equations (\ref{E3.4}, \ref{E3.9}) one has 

\begin{align}\label{E3.8}
\lim_{x \to 0} \frac{d^{2m} f(x)}{d x^{2m}} &= \frac{(-1)^{m+1} (2m+1)!}{2\pi^2} \sum_{n=1}^{\infty} \frac{1}{n^{2m+2}} \nonumber \\
& = \frac{(-1)^{m+1} (2m+1)!}{2\pi^2} \zeta(2m + 2)
\end{align}

In order to evaluate the left term from the above equation we proceed knowing that $f(z) = \frac{e^{2\pi z}}{(e^{2\pi z} - 1)^2} - \frac{1}{(2\pi z)^2}$, as follows.

\begin{align}
\frac{z}{e^z - 1} = \sum_{k=0}^{\infty} \frac{B_k }{k!} z^k \hspace{0.5cm} \Rightarrow \hspace{0.5cm} \frac{2\pi}{e^{2\pi z}- 1} = \frac{B_0}{z} + \sum_{k=1}^{\infty} \frac{B_k \cdot (2\pi)^k }{k!} z^{k-1}
\end{align} where $B_k$ is the $k$'th Bernoulli number.  Differentiating both sides one obtians:

\begin{align}
-\frac{(2\pi)^2 e^{2\pi z}}{(e^{2\pi z} - 1)^2} = -\frac{B_0}{z^2} + \sum_{k=1}^{\infty} \frac{B_k \cdot (2\pi)^k (k-1) }{k!} z^{k-2}
\end{align} hence 

\begin{align}
\frac{e^{2\pi z}}{(e^{2\pi z}-1)^2} - \frac{1}{(2\pi z)^2} = -\sum_{k = 2}^{\infty} \frac{B_k (2\pi)^{k-2}}{(k-2)! \cdot k} z^{k-2} = -\sum_{k = 0}^{\infty} \frac{B_{k+2} (2\pi)^{k}}{k! \cdot (k+2)} z^{k}
\end{align} Therefore 

\begin{align}\label{E3.12}
\lim_{x\to 0} \frac{d^{2m} }{d x^{2m}} \left( \frac{e^{2\pi z}}{(e^{2\pi z}-1)^2} - \frac{1}{(2\pi z)^2} \right) = - \frac{B_{2m+2} \cdot (2\pi)^{2m}}{2m+2}
\end{align} 

From  Equation (\ref{E3.8}, \ref{E3.12}) one has

\begin{align}
(-1)^{m+2} \frac{B_{2(m+1)} \cdot (2\pi)^{2(m+1)}}{2 (2m+2)!} = \zeta(2m+2)
\end{align}

\end{proof}

%
%

\section{Conclusions}

This paper provides a new proof for the exact values of the Riemann's $\zeta$ function at even natural numbers. In the process, exact values for some series are obtained. The idea belongs to Euler, this paper just uses another function to start with. The values of $\zeta(2k)$ are shown to be particular values of another function.


\begin{thebibliography}{99}
\bibitem{ref2} Boas, R. P., \emph{Entire Functions}, New York: Academic Press Inc. (1954), ch. 2

\bibitem{ref3} Conway, J. B., \emph{Functions of One Complex Variable I}, 2nd ed, New York, Springer-Verlag (1995)

\bibitem{ref1} Knopp, K., \emph{Theory of Functions}, Part II,New York: Dover,  (1996), pp. 1 - 7.


\end{thebibliography}
\end{document}